\documentclass{article}

\usepackage[utf8]{inputenc}
\usepackage[T1]{fontenc}
\usepackage{hyperref}       
\usepackage{url}            
\usepackage{booktabs}       
\usepackage{amsfonts}       
\usepackage{amsmath}        
\usepackage{amssymb}        
\usepackage{graphicx}       
\usepackage{microtype}      
\usepackage[numbers]{natbib}
\usepackage{multirow} 
\usepackage{amsthm}
\usepackage{algorithm}
\usepackage{algpseudocode}
\newtheorem{theorem}{Theorem}
\usepackage[margin=1in]{geometry}
\usepackage{longtable}
\usepackage{float}
\usepackage{array}
\newcommand{\tx}{\ensuremath{\times}}

\hypersetup{
    colorlinks=true,
    linkcolor=blue,
    filecolor=magenta,      
    urlcolor=cyan,
    citecolor=blue,
    pdftitle={Vehicle Routing Problem with Resource-Constrained Pickup and
Delivery},
    pdfauthor={Your Name},
}

\title{Vehicle Routing Problem with Resource-Constrained Pickup and Delivery}

\author{
    \textnormal{Manbir Sodhi} \\
    Department of Mechanical, Industrial and Systems Engineering \\
    \and
    \textnormal{Romesh Prasad}\thanks{Corresponding author: romeshsatish.prasad@uri.edu} \\
    Department of Mechanical, Industrial and Systems Engineering \\
    \and
    \textnormal{Harishjitu Saseendran}\\
    Department of Electrical, Computer and Biomedical Engineering \\
    \and
    University of Rhode Island, Kingston, RI \\
    \texttt{\{sodhi, romeshsatish.prasad, hsaseendran\}@uri.edu}
}

\date{\today}

\begin{document}

\maketitle

\begin{abstract}
We introduce the Vehicle Routing Problem with Resource-Constrained Pickup and Delivery (VRP-RPD), where vehicles transport finite identical resources to customer locations for autonomous processing before retrieval and redeployment. Unlike classical pickup-and-delivery problems where the same vehicle must perform both operations for each customer, VRP-RPD permits different vehicles to handle dropoff and pickup at the same location, creating inter-route dependencies absent from standard formulations. This decoupling reflects practical scenarios in autonomous robotics deployment, portable medical equipment distribution, disaster relief operations, construction tool rental, and agricultural sensor networks, where transport vehicles are the scarce resource and need not wait during processing. The objective is to minimize makespan, defined as the time when the last vehicle returns after all resources are deployed, processed, and retrieved. Although makespan objectives are typical in scheduling problems, the significant transportation times relative to processing durations and the resource capacity constraints fundamentally alter optimization considerations. We demonstrate that exact methods are computationally intractable for instances beyond 16 customers. We develop a sequential two-stage metaheuristic pipeline: a GPU-accelerated Adaptive Large Neighborhood Search (ALNS) is run first to obtain a high-quality incumbent, which is then encoded as a warm-start seed for a Biased Random-Key Genetic Algorithm (BRKGA) that refines the solution through evolutionary search. Evaluated on TSPlib-derived benchmarks (17--1000 nodes) across multiple processing time variants (base, 2\tx, 5\tx, 1R10, 1R20), the pipeline consistently achieves the best solutions across all instance sizes, reducing makespan by up to 74\% over baseline heuristics.
\end{abstract}

\section{Introduction}

Autonomous mobile robots are increasingly deployed for warehouse inventory scanning, infrastructure inspection, agricultural monitoring, and last-mile delivery. A common operational pattern is that robots are transported by vehicles to dispersed service locations, operate independently to complete their tasks, and must subsequently be retrieved. The key insight driving efficiency is that transport vehicles, the scarcer resource, need not wait while robots work. A delivery vehicle can deploy robots at several locations and continue to additional sites, while a separate vehicle later collects robots from locations where tasks have concluded. This decoupling enables delivery vehicles to maximize coverage during time-critical windows while pickup vehicles optimize routes based on actual completion times.

This operational pattern extends across diverse domains. In disaster response operations, emergency teams deploy drones for aerial damage assessment, portable communication relays, and hazard monitoring sensors to multiple incident sites. With transport vehicles in critically short supply, decoupling delivery and pickup allows deployment teams to maximize initial coverage while separate retrieval teams respond as incidents resolve. Construction equipment rental companies deliver tools (concrete mixers, scaffolding systems, laser levels) to job sites where they operate for specified rental durations before requiring retrieval. Healthcare providers transport portable diagnostic devices to patient homes for prescribed monitoring sessions, with specialized setup crews handling deployment and separate collection vehicles retrieving equipment as procedures complete. Agricultural operations distribute sensors across farmland during narrow weather windows, collecting them on separate schedules as monitoring periods conclude. In each case, resources are dropped off, operate autonomously for some duration, and must eventually be retrieved, potentially by a different vehicle.

The Vehicle Routing Problem (VRP) has generated extensive research across numerous variants~\cite{toth2014vehicle}. Pickup and delivery problems (PDPs) form a major subclass, classified by Berbeglia et al.~\cite{berbeglia2007static} into one-to-many-to-one, many-to-many, and one-to-one categories based on origin-destination structure. A fundamental constraint in classical PDPs is that pickup and delivery of each request must be performed by the same vehicle~\cite{parragh2010variable}. The PDP with Transfers (PDPT) relaxes this by allowing goods to change vehicles at designated transshipment locations~\cite{masson2013pdpt}. However, a common assumption across these variants is that the vehicles remain present at service locations throughout service duration, or service is treated as instantaneous upon arrival.

We introduce the Vehicle Routing Problem with Resource-Constrained Pickup and Delivery (VRP-RPD) to address this gap. In VRP-RPD, vehicles carry limited resources to customer locations, where each resource operates autonomously for a fixed processing time before becoming available for pickup. Critically, different vehicles may perform dropoff and pickup for the same customer at the same location, without requiring intermediate transfer points, enabling flexible coordination and dynamic resource rebalancing. The objective is to minimize makespan, defined as the time when the last vehicle returns to the depot after all resources have been deployed, processed, and retrieved.

\textbf{Contributions.} We formally define VRP-RPD and provide a complete mixed-integer linear programming formulation. We demonstrate computational intractability for instances beyond 16 customers. We develop a sequential ALNS$\to$BRKGA pipeline: ALNS with GPU-accelerated destroy-repair operators runs first to obtain a high-quality incumbent, which seeds BRKGA's initial population; BRKGA then exploits VRP-RPD's structure through a specialized random-key encoding to refine the solution through evolutionary search. Computational experiments on TSPlib-derived benchmarks~\cite{reinelt1991tsplib} demonstrate that the pipeline achieves the best solutions across all instance sizes, reducing makespan by up to 74\% over baseline heuristics.

The paper proceeds as follows. Section~2 reviews related literature and positions VRP-RPD relative to existing problem classes. Section~3 presents the problem formulation including the complete MILP. Section~4 describes our solution methodology, including the GPU-accelerated ALNS and the BRKGA pipeline. Section~5 describes the experimental setup and benchmark instances. Section~6 presents computational results. Section~7 concludes with practical implications and directions for future research.

\section{Literature Review}

The VRP-RPD integrates characteristics from vehicle routing, pickup and delivery, and scheduling domains. We review related problem classes, highlighting VRP-RPD's distinctive features (same-location dropoff-pickup pairs, cross-vehicle coordination, and resource capacity constraints) that distinguish it from existing formulations.

\subsection{Vehicle Routing Problems}

The classical Vehicle Routing Problem (VRP), introduced by Dantzig and Ramser~\cite{dantzig1959truck}, routes a fleet of vehicles to serve customer demands while minimizing travel distance. The Capacitated VRP (CVRP) limits total demand each vehicle can serve~\cite{toth2014vehicle}. While VRP-RPD shares capacity constraints with CVRP, the semantics differ fundamentally: CVRP capacity represents maximum simultaneous load of goods being transported, while VRP-RPD capacity represents resources carried that are deployed at customer locations, decreasing with dropoffs and increasing with pickups. This creates a dynamic capacity profile throughout each route that depends on the interleaving of dropoff and pickup operations.

\subsection{Pickup and Delivery Problems}

Berbeglia et al.~\cite{berbeglia2007static} classify pickup and delivery problems by request structure: one-to-many-to-one (goods flow between depot and customers), many-to-many (goods flow between arbitrary origin-destination pairs), and one-to-one (each request has a specific origin and destination). A fundamental constraint across all these classes is that pickup and delivery of each request must be performed by the same vehicle~\cite{parragh2008survey2}.

\textbf{VRP with Simultaneous Pickup and Delivery (VRPSPD)} requires vehicles to handle both operations during a single customer visit~\cite{dethloff2001vehicle, koc2020review}. Unlike VRP-RPD, delivery and pickup in VRPSPD occur simultaneously with the vehicle present; there is no autonomous processing period. 

\textbf{Pickup and Delivery Problem with Time Windows (PDPTW)} transports goods from pickup to delivery locations with time window constraints~\cite{dumas1991pickup, ropke2006adaptive}; it features spatially separated pickup and delivery locations, requires the same vehicle for both operations, and enforces precedence constraints within a single route. In contrast, VRP-RPD features co-located operations (dropoff and pickup occur at the same customer location), permits cross-vehicle coordination (different vehicles may handle dropoff and pickup), and creates precedence dependencies that span multiple routes.

\textbf{Dial-a-Ride Problem (DARP)} routes vehicles for passenger transportation, where passengers board at origins and alight at destinations~\cite{cordeau2007darp, parragh2010variable}. DARP requires continuous vehicle presence during transport; passengers cannot be ``deployed'' to operate independently. VRP-RPD resources, by contrast, function autonomously after deployment. 

\textbf{PDP with Transfers (PDPT)} allows goods to change vehicles at designated transshipment locations~\cite{masson2013pdpt}. This introduces cross-vehicle coordination but requires explicit transfer points where vehicles must rendezvous. VRP-RPD dropoff and pickup occur at the same customer location without requiring intermediate transfer points or vehicle synchronization.

\subsection{Scheduling and Technician Routing}

\textbf{Technician Routing and Scheduling Problem (TRSP)} routes service personnel accounting for skills, tools, and service durations~\cite{kovacs2012adaptive, pillac2013technician}. Unlike VRP-RPD, technicians in TRSP remain on-site throughout service duration; they cannot deploy a resource and depart while service continues autonomously.

\textbf{Traveling Salesman Problem with Jobs (TSPJ)} involves a single traveler visiting nodes to initiate jobs that continue autonomously, to minimize makespan~\cite{mosayebi2021traveling}. TSPJ shares key characteristics with VRP-RPD: co-located initiation and completion, autonomous processing during the traveler's absence, and makespan-based objectives. However, TSPJ involves only a single traveler, does not model resource retrieval (jobs simply complete without requiring collection), and imposes no capacity constraints on how many jobs can be active simultaneously. VRP-RPD extends the TSPJ concept to multiple vehicles with limited capacity, requiring both dropoff and pickup operations while enabling cross-vehicle coordination.

\subsection{Positioning VRP-RPD}

Table~\ref{tab:comparison} compares VRP-RPD with related problem classes across three dimensions: whether dropoff and pickup occur at the same or different locations, whether the same or different agents can perform paired operations, and how capacity is interpreted. VRP-RPD occupies a unique position combining same-location dropoff-pickup pairs, cross-vehicle coordination capability, and resource capacity constraints reflecting simultaneous deployment, a configuration that is not addressed by existing formulations.

\begin{table}[t]
\centering
\caption{Comparison of VRP-RPD with related problem classes}
\label{tab:comparison}
\small
\begin{tabular}{lccc}
\toprule
Problem & Location & Agent & Capacity \\
\midrule
VRP & Single visit & N/A & Transported goods \\
VRPSPD & Simultaneous & Same & Net load change \\
PDPTW & Different & Same & Transported goods \\
DARP & Different & Same & Passengers \\
TRSP & On-site & Same & Tools/skills \\
TSPJ & Same & Single & Unlimited \\
\textbf{VRP-RPD} & \textbf{Same} & \textbf{Different} & \textbf{Deployed resources} \\
\bottomrule
\end{tabular}
\end{table}

\section{Problem Formulation}

In the Vehicle Routing Problem with Resource-Constrained Pickup and Delivery (VRP-RPD), a fleet of $m$ vehicles operates from a central depot, each carrying at most $k$ identical resources. A set of $n$ customers requires service, where each customer requires three phases: (1) dropoff, where a vehicle arrives and deploys one resource; (2) processing, where the resource operates autonomously for $p_c$ time units; and (3) pickup, where a vehicle arrives and retrieves the resource. 

In the mixed interleaving variant, the constraint that dropoff and pickup must be performed by the same vehicle is relaxed. Vehicle $a$ may drop off a resource at customer $c$, while vehicle $b \neq a$ picks up the resource after processing completes. This flexibility enables greater routing efficiency but introduces cross-vehicle coordination requirements. The pickup vehicle must not arrive before processing completes, regardless of which vehicle performed the dropoff.

\subsection{MILP Formulation}

We present a mixed-integer linear programming formulation for VRP-RPD. Let
$C=\{1,\ldots,n\}$ denote customers, $V=\{1,\ldots,m\}$ denote vehicles, and
$N=\{0\}\cup C$ denote all locations including the depot (index $0$).
Parameters include vehicle capacity $k$, travel time $d_{i,j}$ from $i$ to $j$,
processing time $p_c$ at customer $c$, and a sufficiently large constant $M$.

\textbf{Decision Variables.}
Binary routing variables $x_{i,j,v}\in\{0,1\}$ indicate whether vehicle $v$
travels directly from location $i$ to $j$.
Binary visit variables $y_{c,v}\in\{0,1\}$ indicate whether vehicle $v$ visits
customer $c$.
Binary operation variables $\delta_{c,v}$ and $\pi_{c,v}$ indicate whether
vehicle $v$ performs the dropoff and the pickup at customer $c$, respectively.
Continuous timing variables include: $t_{i,v}$, the arrival time of vehicle $v$
at location $i\in N$; $t^{\text{dep}}_{i,v}$, the departure time of vehicle $v$
from location $i\in N$; $T_{\text{drop}}[c]$ and $T_{\text{pickup}}[c]$, the
actual dropoff and pickup times at customer $c$; and $t_{\text{return},v}$, the
time vehicle $v$ returns to the depot. The capacity variable $q_{i,v}$ denotes the number of resources on vehicle $v$ immediately after completing service at location $i\in N$.
The integer variable $u_{c,v}$ denotes the position of customer $c$ in vehicle $v$'s
route for subtour elimination. The makespan $T$ is the objective variable to be minimized.

\textbf{Service Constraints.}
Each customer receives exactly one dropoff and one pickup, possibly by different
vehicles:
\begin{align}
\sum_{v\in V}\delta_{c,v}=1,\quad \sum_{v\in V}\pi_{c,v}=1,\quad \forall c\in C
\end{align}

\textbf{Depot Constraints.}
Each vehicle departs from and returns to the depot exactly once:
\begin{align}
\sum_{j\in C} x_{0,j,v} = 1,\quad \sum_{i\in C} x_{i,0,v} = 1,\quad \forall v\in V
\end{align}
Time is anchored at the depot:
\begin{align}
t_{0,v}=0,\quad t^{\text{dep}}_{0,v}=0,\quad \forall v\in V
\end{align}

\textbf{Flow Conservation and Visit Linking.}
For each customer and vehicle, flow balance defines whether the customer is
visited:
\begin{align}
\sum_{i\in N} x_{i,c,v} = y_{c,v},\quad \sum_{j\in N} x_{c,j,v} = y_{c,v},
\quad \forall c\in C, v\in V
\end{align}
Self-loops are prohibited:
\begin{align}
x_{i,i,v}=0,\quad \forall i\in N, v\in V
\end{align}
A vehicle must visit a customer to perform an operation:
\begin{align}
\delta_{c,v}\le y_{c,v},\quad \pi_{c,v}\le y_{c,v},\quad \forall c\in C, v\in V
\end{align}

\textbf{Operation Timing.}
If vehicle $v$ performs dropoff at $c$, dropoff occurs at arrival:
\begin{align}
t_{c,v} - M(1-\delta_{c,v}) \le T_{\text{drop}}[c] \le t_{c,v} + M(1-\delta_{c,v}),
\quad \forall c\in C, v\in V
\end{align}
If vehicle $v$ performs pickup at $c$, pickup occurs at departure:
\begin{align}
t^{\text{dep}}_{c,v} - M(1-\pi_{c,v}) \le T_{\text{pickup}}[c] \le
t^{\text{dep}}_{c,v} + M(1-\pi_{c,v}),
\quad \forall c\in C, v\in V
\end{align}
Processing must complete before pickup, enforced globally independent of vehicle
identity:
\begin{align}
T_{\text{pickup}}[c] \ge T_{\text{drop}}[c] + p_c,\quad \forall c\in C
\end{align}
This cross-vehicle precedence constraint creates the inter-route dependencies
that distinguish VRP-RPD from classical pickup-and-delivery formulations.

\textbf{Departure Time.}
Departure is at least arrival:
\begin{align}
t^{\text{dep}}_{c,v} \ge t_{c,v},\quad \forall c\in C, v\in V
\end{align}
If the same vehicle performs both dropoff and pickup at $c$, it must wait for
processing:
\begin{align}
t^{\text{dep}}_{c,v} \ge t_{c,v} + p_c(\delta_{c,v}+\pi_{c,v}-1),
\quad \forall c\in C, v\in V
\end{align}

\textbf{Time Propagation.}
Travel time consistency along routes:
\begin{align}
t_{j,v} \ge t^{\text{dep}}_{i,v} + d_{i,j} - M(1-x_{i,j,v}),
\quad \forall i,j\in N, v\in V
\end{align}

\textbf{Capacity Constraints.}
Vehicles depart the depot with full capacity:
\begin{align}
q_{0,v} = k,\quad \forall v\in V
\end{align}
Capacity bounds:
\begin{align}
0 \le q_{i,v} \le k,\quad \forall i\in N, v\in V
\end{align}
Capacity propagates along arcs and updates at customers:
\begin{align}
q_{j,v} \ge q_{i,v} - \delta_{j,v} + \pi_{j,v} - M(1-x_{i,j,v}),
\quad \forall i\in N, j\in C, v\in V \\
q_{j,v} \le q_{i,v} - \delta_{j,v} + \pi_{j,v} + M(1-x_{i,j,v}),
\quad \forall i\in N, j\in C, v\in V
\end{align}
Dropoff requires carrying at least one resource:
\begin{align}
q_{i,v} \ge \delta_{j,v} - M(1-x_{i,j,v}),
\quad \forall i\in N, j\in C, v\in V
\end{align}
Pickup cannot exceed capacity:
\begin{align}
q_{i,v} + \pi_{j,v} \le k + M(1-x_{i,j,v}),
\quad \forall i\in N, j\in C, v\in V
\end{align}

\textbf{Subtour Elimination.}
While time propagation with strictly positive travel times implicitly prevents
subtours, we include explicit Miller-Tucker-Zemlin (MTZ) constraints to
strengthen the LP relaxation:
\begin{align}
1 \le u_{c,v} \le |C|,\quad \forall c\in C, v\in V \\
u_{i,v} - u_{j,v} + |C|\,x_{i,j,v} \le |C|-1,\quad \forall i,j\in C, i\ne j, v\in V
\end{align}

\textbf{Makespan Objective.}
Return time is determined by the last arc into the depot:
\begin{align}
t_{\text{return},v} \ge t^{\text{dep}}_{c,v} + d_{c,0} - M(1-x_{c,0,v}),
\quad \forall c\in C, v\in V
\end{align}
The makespan is the maximum return time:
\begin{align}
T \ge t_{\text{return},v},\quad \forall v\in V
\end{align}
\begin{align}
\text{Minimize}\quad T
\end{align}

\subsection{Computational Complexity and Bound Quality}

\begin{theorem}
VRP-RPD is NP-hard.
\end{theorem}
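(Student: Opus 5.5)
We reduce from the metric Traveling Salesman Problem, in its decision version, which is NP-hard. Given a TSP instance on cities $\{0,1,\ldots,n\}$ with metric travel times $d_{i,j}$ and a bound $B$, we build a VRP-RPD instance with city $0$ as the depot, the remaining cities as customers $C$, travel times $d_{i,j}$, a single vehicle ($m=1$), capacity $k=n$, and processing times $p_c=0$ for all $c\in C$. The construction is clearly polynomial. The claim to prove is that this instance has makespan at most $B$ if and only if the TSP instance has a Hamiltonian tour of length at most $B$.

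\emph{From tour to schedule.} Given a tour of length $L\le B$, the single vehicle follows the tour. At each customer it drops off a resource and immediately picks it up; this is legal because $p_c=0$, so by the constraint $T_{\text{pickup}}[c]\ge T_{\text{drop}}[c]+p_c$ and the same-vehicle waiting constraint it may depart at its arrival time. The load therefore never drops below $k-1\ge 0$ or exceeds $k$. The vehicle returns at time $L$, so the makespan is at most $B$.

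\emph{From schedule to tour.} In any feasible schedule the single vehicle must perform both the dropoff and the pickup at every customer. Under the MILP as written, each customer is visited at most once per vehicle, because the flow constraints force $y_{c,v}\in\{0,1\}$. So the vehicle's route is a single depot-to-depot cycle through all customers, that is, a Hamiltonian tour. Since departure is never earlier than arrival, the return time is at least the sum of travel times along this tour. A makespan of at most $B$ therefore gives a tour of length at most $B$.

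The main obstacle is pinning down the semantics of revisits. If the intended problem allowed a vehicle to pass through a customer several times, the route would be a closed walk rather than a cycle. This is handled by the metric assumption: shortcutting the closed walk to the first visit of each customer yields a Hamiltonian tour that is no longer, by the triangle inequality. The reverse direction is unaffected, since a tour is in particular a closed walk. If one prefers to avoid relying on $p_c=0$, a small positive $p_c$ does not fit the same-vehicle waiting constraint cleanly. Instead, one can set $p_c=0$ in a decision version with bound $B$, or take $m=1$ and $k=n$ with all $p_c$ equal to an arbitrarily small $\epsilon$ and scale $B$ accordingly. Either way, TSP embeds as a special case, and VRP-RPD is NP-hard.
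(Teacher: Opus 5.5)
Your argument is the paper's reduction (zero processing times and non-binding capacity collapse VRP-RPD into a classical routing problem) specialized to $m=1$, so the source problem is metric TSP rather than min-max VRP; fixing a single vehicle makes ``the same vehicle performs both operations'' automatic, and your triangle-inequality shortcutting supplies the justification the paper's brief proof omits for why extra visits cannot help. One caution: drop the appeal to ``the MILP as written'', because its pickup-capacity constraint $q_{i,v}+\pi_{j,v}\le k+M(1-x_{i,j,v})$ checks the arrival load before the same-stop dropoff, so a vehicle leaving the depot full ($q_{0,v}=k$) can never pick up at its first stop, and every $m=1$ instance of the literal MILP is therefore infeasible; your forward direction (like the paper's own proof) holds for the problem as defined in the prose, where the load goes $k\to k-1\to k$ within a visit, and your shortcutting fallback already makes the converse independent of the MILP.
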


\begin{proof}
By reduction from the min-max Vehicle Routing Problem (min-max VRP). 
Consider a VRP-RPD instance where processing times $p_c = 0$ for all 
customers $c \in C$, and vehicle capacity $k \geq n$. Since processing 
time is zero, pickup can occur immediately after dropoff at each customer. 
Since capacity is non-binding, each customer requires exactly one visit 
where both operations are performed instantaneously by the same vehicle. 
The problem reduces to partitioning customers among $m$ vehicles and 
sequencing visits to minimize the maximum return time, precisely the 
min-max VRP. Since min-max VRP is NP-hard~\cite{lenstra1981complexity}, 
VRP-RPD is NP-hard.
\end{proof}

We validated the MILP formulation on small test instances with 5 customers,
where Gurobi 10.0~\cite{gurobi} produced optimal solutions within seconds.
However, the formulation exhibits severe scalability limitations due to the
extensive use of big-M constraints in time propagation and precedence linking,
which yield weak linear programming relaxations. For instances such as
\texttt{gr17} (17 customers) and \texttt{gr24} (24 customers), Gurobi obtained
feasible solutions but with optimality gaps of approximately 70\% after two hours
of computation on a workstation with 32 CPU cores and 128 GB RAM, indicating the
solver found valid solutions but could not substantially tighten the lower
bound. For larger instances (50+ customers), the number of binary variables
grows as $O(n^2 m)$ and continuous variables as $O(nm)$, exceeding practical
memory limits before meaningful bound improvement occurred. Consequently, we do
not report optimality gaps for metaheuristic solutions, as the LP relaxation
bounds are too weak to provide meaningful quality certificates. This
intractability motivates our metaheuristic approach, which produces high-quality
solutions for instances with hundreds of customers within minutes. Developing
tighter formulations or valid inequalities for VRP-RPD remains an open direction
for future research.

\section{Solution Methodology}
\label{sec:methodology}

The computational intractability demonstrated in Section~3 motivates a sequential metaheuristic pipeline combining a GPU-accelerated Adaptive Large Neighborhood Search (ALNS)~\cite{ropke2006adaptive} and a Biased Random-Key Genetic Algorithm (BRKGA)~\cite{goncalves2011}. The two stages operate as follows: ALNS is run first to obtain a high-quality solution through GPU-accelerated neighborhood search, which is then encoded as a warm-start seed for BRKGA's initial population. BRKGA subsequently exploits VRP-RPD's structure through specialized encoding and decoding mechanisms: (1) the random-key representation naturally handles coupled assignment and sequencing decisions without requiring complex repair operators, and (2) genetic operators preserve chromosome validity while enabling effective exploration of the solution space beyond the ALNS starting point. This pipeline design ensures BRKGA invests its evolutionary budget in refining and diversifying from a strong incumbent rather than recovering from random initialization.

\subsection{Problem-Specific Challenges}

VRP-RPD presents three design challenges for metaheuristic solution methods:

\textbf{Decoupled operations.} Unlike classical pickup-and-delivery where the same vehicle performs both operations, VRP-RPD permits different vehicles to handle dropoff and pickup at the same customer. The solution representation must encode this flexibility while maintaining feasibility.

\textbf{Temporal dependencies.} Pickup at customer $c$ cannot occur before dropoff completion plus processing time $p_c$. These precedence constraints span multiple vehicle routes and cannot be enforced through simple route sequencing.

\textbf{Dynamic capacity.} Vehicle capacity changes throughout each route as resources are deployed and retrieved. The capacity trajectory depends on the interleaving of dropoff and pickup operations across all vehicles, creating complex feasibility constraints.

\subsection{Adaptive Large Neighborhood Search (ALNS)}

The first stage of the pipeline is an Adaptive Large Neighborhood Search (ALNS)~\cite{ropke2006adaptive} with GPU-accelerated parallel neighborhood evaluation. ALNS rapidly produces a high-quality incumbent solution through iterative neighborhood destruction and reconstruction; this solution is then passed to BRKGA (Section~4.4) as a warm-start seed.

\subsubsection{Algorithm Overview and GPU Acceleration}

ALNS iteratively improves a solution by partially destroying it (removing a subset of customers) and repairing it (reinserting removed customers). At each iteration, a destroy operator removes $q$ customers from the current solution, and a repair operator reinserts them. The resulting solution is accepted or rejected according to a simulated annealing criterion. Operator weights are updated periodically based on historical performance, making the operator selection adaptive. This biases selection toward operators that have produced improving solutions. Algorithm~\ref{alg:alns} presents the core ALNS framework executed by each parallel instance.

\textbf{GPU-Accelerated Parallel Architecture.} A key distinguishing feature of our implementation is massively parallel execution using CUDA. The computational bottleneck in ALNS is the repair phase, where evaluating all feasible insertion positions for each removed customer requires $O(q \cdot m \cdot L)$ cost computations per iteration, with $L$ the average route length. We exploit GPU parallelism at three levels:

\begin{enumerate}
\item \textbf{Multi-instance parallelism:} Each GPU executes 32 independent ALNS instances simultaneously mapped to persistent thread blocks. Each instance maintains its own solution, temperature, and operator weights, exploring different regions of the solution space. A global best solution is maintained in unified memory with atomic lock-based updates. Every 1000 iterations, each instance checks whether a sibling instance on the same GPU has found a superior solution and imports it if so, enabling intra-GPU knowledge sharing without requiring host-side synchronization.

\item \textbf{Intra-instance parallelism:} Within each ALNS instance, 256 threads cooperate on repair operator evaluation. Threads evaluate insertion costs for different (agent, position) combinations in parallel, with warp-level reductions identifying optimal insertions. Shared memory buffers pickup-ready times for cross-agent timing evaluation.

\item \textbf{Multi-GPU scaling:} For systems with multiple GPUs, independent CUDA streams execute ALNS batches on each device in segments of 100 iterations. After each segment, the host synchronizes results across GPUs, reads the best solution found on each device and writes the global incumbent back to all devices. This 100-iteration host synchronization interval is distinct from the 1000-iteration intra-GPU best check: the former coordinates across devices via the CPU, while the latter is a lightweight device-side poll within a single GPU's shared memory. Together they provide two-level knowledge sharing at different granularities.
\end{enumerate}

\begin{algorithm}[H]
\caption{ALNS Framework (per GPU instance)}
\label{alg:alns}
\begin{algorithmic}[1]
\Require Instance $(C, m, k, d, \mathbf{p})$, parameters $(\alpha, T_0, \tau, r)$
\Ensure Best solution $S^*$, makespan $z^*$

\State $S_0 \gets \textsc{ConstructInitialSolution}(C, m, k, d, \mathbf{p})$
\State $S \gets S_0$, $S^* \gets S_0$, $z^* \gets \text{makespan}(S_0)$
\State $T \gets T_0 \cdot z^*$ \Comment{Initial temperature}
\State Initialize weights $w_i^{-} = w_j^{+} = 1$ for all destroy/repair operators
\State Initialize scores $s_i = 0$ and attempt counts $a_i = 0$
\State $\textit{stagnation} \gets 0$

\For{$\textit{iter} = 1, \ldots, \textit{max\_iter}$}
    \State Select destroy operator $i$ with probability $p_i = w_i^{-} / \sum_j w_j^{-}$
    \State Select repair operator $j$ with probability $p_j = w_j^{+} / \sum_k w_k^{+}$
    \State $a_i \gets a_i + 1$, $a_j \gets a_j + 1$ \Comment{Increment attempt counts}
    \State $S' \gets \text{repair}_j(\text{destroy}_i(S, q))$ \Comment{Destroy then repair}
    \State $z' \gets \text{makespan}(S')$

    \If{$z' < z$} \Comment{Improving solution}
        \State $S \gets S'$, $z \gets z'$
        \State $s_i \gets s_i + \sigma_2$, $s_j \gets s_j + \sigma_2$
        \If{$z' < z^*$} \Comment{New global best}
            \State $S^* \gets S'$, $z^* \gets z'$
            \State $s_i \gets s_i + (\sigma_1 - \sigma_2)$, $s_j \gets s_j + (\sigma_1 - \sigma_2)$
            \State $\textit{stagnation} \gets 0$
        \EndIf
    \ElsIf{$\text{rand}() < \exp(-(z' - z)/T)$} \Comment{Simulated annealing acceptance}
        \State $S \gets S'$, $z \gets z'$
        \State $s_i \gets s_i + \sigma_3$, $s_j \gets s_j + \sigma_3$
    \EndIf

    \State $T \gets \alpha \cdot T$ \Comment{Cool temperature}
    \State $\textit{stagnation} \gets \textit{stagnation} + 1$

    \If{$\textit{stagnation} \geq 2000$} \Comment{Reheat on stagnation}
        \State $T \gets 0.5 \cdot T_0 \cdot z^*$
        \State $\textit{stagnation} \gets 0$
    \EndIf

    \If{$\textit{iter} \bmod \tau = 0$} \Comment{Adaptive weight update}
        \For{each operator $\ell$}
            \State $w_\ell \gets \max(0.1,\, (1-r) \cdot w_\ell + r \cdot s_\ell / a_\ell)$
        \EndFor
        \State Reset scores $s_\ell \gets 0$ and attempt counts $a_\ell \gets 0$
    \EndIf
\EndFor

\State \Return $S^*, z^*$
\end{algorithmic}
\end{algorithm}

\textbf{Roulette Wheel Selection.} Operator selection (lines 8--9) uses probability-proportional-to-weight sampling. Operator $i$ is selected with probability $p_i = w_i / \sum_j w_j$. This biases selection toward historically successful operators while maintaining non-zero probability for all operators.

\textbf{Adaptive Scoring.} Operators accumulate scores based on iteration outcomes: $\sigma_1 = 33$ for finding a new global best, $\sigma_2 = 9$ for an improving solution, and $\sigma_3 = 13$ for an accepted non-improving solution. Every $\tau$ iterations, weights are updated using a reaction factor $r$ that balances historical performance with recent success, with a minimum weight of 0.1 preventing complete exclusion.

\subsubsection{Destroy Operators}

We implement six destroy operators that target different solution characteristics. At each iteration, the number of customers removed is sampled uniformly from $q \in [q_{\min},\, q_{\max}]$, where
\[
q_{\max} = \max\!\bigl(4,\; \lfloor 0.05n \rfloor\bigr), \qquad
q_{\min} = \max\!\bigl(4,\; \lfloor q_{\max}/2 \rfloor\bigr).
\]
For small instances ($n \lesssim 80$), $\lfloor 0.05n \rfloor < 4$, so both bounds collapse to 4 and exactly four customers are removed each iteration. For larger instances the range grows with $n$: at $n = 202$ the range is $[5, 10]$, and at $n = 431$ it is $[10, 21]$. This ensures destruction scales with instance size while maintaining a practical minimum disruption level of four customers.

\textbf{Random Removal.} Selects $q$ customers uniformly at random via Fisher-Yates shuffle. This operator provides unbiased diversification, preventing the search from becoming trapped in local optima defined by specific structural patterns.

\textbf{Worst Removal.} Evaluates the removal gain $\Delta_c$ for each customer $c$, defined as the reduction in makespan achieved by removing $c$ from its current position. Customers are ranked by removal gain, and selection is randomized using a power parameter $p = 6$. The customer at rank $r$ is selected with probability proportional to $r^{-p}$, favoring poorly-placed customers while maintaining stochastic diversity~\cite{ropke2006adaptive}.

\textbf{Shaw Removal.} Removes clusters of related customers based on a composite relatedness measure~\cite{shaw1998}:
\[
R(c_i, c_j) = \phi \cdot d_{c_i, c_j} + \chi \cdot |T_{\text{drop}}[c_i] - T_{\text{drop}}[c_j]| + \frac{\omega}{2}\cdot\mathbb{1}[\delta\text{-agent}(c_i) \neq \delta\text{-agent}(c_j)] + \frac{\omega}{2}\cdot\mathbb{1}[\pi\text{-agent}(c_i) \neq \pi\text{-agent}(c_j)]
\]
with weights $\phi = 9$, $\chi = 3$, $\omega = 5$, where $\delta\text{-agent}(c)$ and $\pi\text{-agent}(c)$ denote the vehicle performing the dropoff and pickup of customer $c$, respectively. Lower $R$ indicates greater relatedness. Here $d_{c_i, c_j}$ denotes the edge-weight travel time between customers $c_i$ and $c_j$ as defined in Section~3; no explicit spatial coordinates are required. The agent-mismatch term contributes $\omega/2$ per operation type (dropoff or pickup) that differs between the two customers, ranging from $0$ (both operations on the same vehicles) to $\omega$ (dropoff vehicles differ and pickup vehicles differ). This decomposition reflects VRP-RPD's cross-vehicle structure, where dropoff and pickup assignments are independent. Starting from a randomly selected seed customer, the operator iteratively adds the most related (lowest-$R$) unremoved customer until $q$ customers are selected. The agent-mismatch terms encourage removing customers whose operations are currently concentrated on the same vehicles, creating opportunities for cross-vehicle reassignment during repair.

\textbf{Cluster Removal.} Selects a random customer as the center and removes its $q$ nearest neighbors by travel time (edge weight). Unlike Shaw removal, this operator considers only pairwise travel time, creating compact removal regions that enable localized route restructuring.

\textbf{Route Removal.} Selects a random non-empty vehicle route and removes all customers assigned to it (both dropoff and pickup operations). If the selected route contains fewer than $q_{\min}$ customers, additional customers are removed from other routes using nearest-neighbor proximity. This operator enables large-scale restructuring of vehicle assignments and is particularly effective when entire routes are suboptimal.

\textbf{Critical Path Removal.} Identifies the bottleneck vehicle (the one determining the makespan) and preferentially removes customers from its route. If the bottleneck route contains fewer than $q_{\min}$ customers, additional customers are removed from other routes based on their contribution to overall makespan. This operator directly targets the makespan objective by reducing the critical vehicle's workload.

\subsubsection{Repair Operators}

Four repair operators reinsert removed customers, each adapted for VRP-RPD's cross-vehicle coordination structure. All operators evaluate insertions using a two-pass timing evaluation. Pass~1 schedules all dropoff operations and computes pickup-ready times ($T_{\text{drop}}[c] + p_c$) and pass~2 schedules pickup operations using the correct ready times, which may depend on dropoff times computed on different vehicles. This two-pass structure ensures that cross-vehicle assignments are evaluated with accurate timing. Capacity constraints are enforced during insertion. Dropoff requires available resources ($q_v > 0$), and pickup requires available capacity ($q_v < k$).

\textbf{Greedy Insertion.} Sequentially inserts the customer with lowest insertion cost. For each uninserted customer $c$, all feasible (vehicle, position) combinations for both dropoff and pickup are evaluated in parallel across GPU threads, including cross-vehicle assignments where different vehicles handle dropoff and pickup. The insertion cost is the makespan delta $\Delta T$ resulting from the insertion. After each insertion, route timings are recomputed for accurate subsequent evaluations.

\textbf{Regret-2 Insertion.} For each uninserted customer, computes the two best insertion options and their cost difference (regret value $c_2 - c_1$). The customer with highest regret (most ``damaged'' by not receiving its best insertion) is inserted first. This anticipatory strategy avoids greedy commitments that preclude globally superior assignments.

\textbf{Regret-3 Insertion.} Extends Regret-2 by considering the three best insertion options, with regret defined as $(c_2 - c_1) + (c_3 - c_1)$. This provides stronger lookahead for instances with many feasible insertion positions.

\textbf{Regret-$m$ Insertion.} Considers all $m$ vehicles as candidates, computing regret across the full vehicle fleet. This operator captures the broadest view of insertion alternatives and is particularly valuable when cross-vehicle coordination creates diverse assignment opportunities.

\subsubsection{Acceptance Criterion and Temperature Control}

We employ simulated annealing as the acceptance criterion. A candidate solution with makespan $z'$ replacing the current solution with makespan $z$ is accepted with probability:
\[
P(\text{accept}) = \begin{cases}
1 & \text{if } z' < z \\
\exp\!\bigl(-(z' - z)/T\bigr) & \text{otherwise}
\end{cases}
\]
The initial temperature coefficient is $T_0 = 0.30$, so the initial temperature is $T = 0.30 \cdot z_{\text{init}}$, calibrated so that initially approximately 30\% of worsening moves are accepted. Temperature decreases geometrically at rate $\alpha = 0.9998$ per iteration. To escape stagnation, we implement a reheating mechanism: if no improvement is found for 2000 consecutive iterations \emph{and} the current temperature has cooled to below 1\% of its initial value $T_{\text{init}}$, the temperature is reset to $0.50 \cdot T_0 \cdot z^*$. The temperature gate prevents premature reheating when the search is still actively exploring near its starting temperature, reserving the reheat for situations where the search has genuinely stagnated in a cold, confined region of the solution space.

\subsubsection{Initial Solution Construction}

The initial solution is constructed through a three-phase procedure. First, sweep-based clustering orders customers by pseudo-angle relative to the depot (computed from the distance matrix without requiring explicit coordinates) and partitions them into $m$ sectors. Second, load balancing redistributes customers between sectors based on estimated workload $w_c = 2 d_{0,c} + p_c$ (round-trip depot travel time plus processing time), iteratively transferring customers from overloaded to underloaded vehicles until workload imbalance falls below 10\%. Third, nearest-neighbor route construction builds each vehicle's route by greedily selecting the closest feasible operation (dropoff or pickup) at each step, dynamically choosing between operations based on current inventory and processing readiness. The initial solution is refined through customer relocation between vehicles and cross-vehicle pickup reassignment, applied iteratively until no improvement is found.

\subsubsection{Periodic Local Search}

Two targeted local search procedures are applied periodically within the main ALNS loop to complement destroy-repair exploration with fine-grained intensification. Both are parallelized across the 256 GPU threads of each instance.

\textbf{Pickup Repositioning} (every 200 iterations). For each pickup operation currently assigned to a bottleneck or near-bottleneck vehicle, the procedure evaluates relocating that pickup to every feasible position on every vehicle, including cross-vehicle reassignment. Because pickup timing is bounded below by the corresponding dropoff time plus processing time ($T_{\text{pick}} \geq T_{\text{drop}}[c] + p_c$), moving a pickup does not alter dropoff assignments and preserves precedence feasibility by construction. The move yielding the greatest makespan reduction is applied, and the procedure repeats for up to three passes per invocation.

\textbf{Cross-Agent Customer Relocation} (every 500 iterations). The bottleneck vehicle is identified, and the procedure attempts to relocate entire customer assignments—both dropoff and pickup operations—from the bottleneck to lighter vehicles. For each candidate customer with at least one operation on the bottleneck route, all feasible $(v, p_D, p_P)$ triples (target vehicle $v$ and insertion positions $p_D$, $p_P$ for the dropoff and pickup) are evaluated in parallel by GPU threads. The relocation yielding the greatest makespan reduction is applied; the procedure iterates for up to three passes, re-identifying the current bottleneck each time.

Together, Pickup Repositioning and Cross-Agent Relocation provide two levels of intensification: fine-grained pickup rebalancing that leaves dropoff structure intact, and coarser whole-customer migration targeted at the current bottleneck. Both operators are applied deterministically (no acceptance criterion) and act on the current solution rather than on destroyed-and-repaired candidates, ensuring that improvements accumulate across iterations independently of the simulated annealing schedule.

\subsubsection{Algorithmic Configuration}

Table~\ref{tab:alns_config} summarizes ALNS hyperparameters. The cooling rate and stagnation threshold balance intensification with diversification. The 32 concurrent instances per GPU provide robust exploration, while the two-level synchronization design (100-iteration host sync; 1000-iteration intra-GPU best check) limits coordination overhead.

\begin{table}[t]
\centering
\caption{ALNS hyperparameters}
\label{tab:alns_config}
\begin{tabular}{ll}
\toprule
Parameter & Value \\
\midrule
Initial temperature coefficient ($T_0$) & 0.30 \\
Cooling rate ($\alpha$) & 0.9998 \\
Reheat factor & 0.50 \\
Stagnation threshold & 2000 iterations \\
Weight update interval ($\tau$) & 100 iterations \\
Reaction factor ($r$) & 0.1 \\
Removal size ($q_{\max}$) & $\max(4, \lfloor 0.05n \rfloor)$ \\
Removal size ($q_{\min}$) & $\max(4, \lfloor q_{\max}/2 \rfloor)$ \\
Scores $(\sigma_1, \sigma_2, \sigma_3)$ & (33, 9, 13) \\
Minimum operator weight & 0.1 \\
Instances per GPU & 32 \\
Threads per instance & 256 \\
Iterations per host sync (multi-GPU) & 100 \\
Intra-GPU global best check interval & 1000 iterations \\
Pickup repositioning interval & 200 iterations \\
Cross-agent relocation interval & 500 iterations \\
\bottomrule
\end{tabular}
\end{table}

\subsection{Baseline Heuristics for Comparison}

To evaluate the pipeline's performance, we implement a portfolio of five specialized construction heuristics as baseline methods. These heuristics represent standard approaches adapted from classical VRP variants and provide a performance benchmark for both metaheuristic stages. Unlike the construction heuristics used internally in Section~4.4.4 to seed BRKGA's initial population, these baseline methods produce complete standalone solutions for direct comparison.

\begin{enumerate}
\item \textbf{Nearest Neighbor (NN):} Greedy route construction that iteratively selects the closest unserved customer. Prioritizes travel distance minimization, often producing geographically compact routes. Routes are constructed sequentially, with each vehicle serving customers until capacity is exhausted.

\item \textbf{Max Regret Insertion:} Insertion-based heuristic that computes the ``regret'' for each unassigned customer, defined as the difference between its best and second-best insertion position across all routes. Prioritizes customers with high regret (difficult to place), avoiding premature commitments that may block superior global solutions.

\item \textbf{Clarke-Wright Savings:} Classic VRP heuristic that merges routes based on distance savings $s_{ij} = d_{0i} + d_{0j} - d_{ij}$. Adapted for VRP-RPD by considering savings from consolidating customer assignments across vehicles while respecting capacity and precedence constraints.

\item \textbf{Greedy Defer:} Prioritizes dropoff operations over pickups through a configurable penalty. A deferral multiplier $\lambda \in [5, 15]$ increases the perceived cost of pickup operations during route construction, encouraging vehicles to complete dropoffs before retrieving processed resources. This reduces vehicle idle time waiting for processing completion.

\item \textbf{2-Opt Improved:} Applies local search (2-opt, relocate, swap operators) to solutions generated by the base heuristics. Provides refined starting solutions through neighborhood exploration. We apply 2-opt to each baseline heuristic solution as a post-processing step.
\end{enumerate}

Each baseline heuristic produces a complete solution represented as vehicle tours: $\text{tour}_v = [(c_1, \text{op}_1), (c_2, \text{op}_2), \ldots]$ where $c_i$ is a customer and $\text{op}_i \in \{\text{D}, \text{P}\}$ is the operation type (dropoff or pickup). The tours implicitly define vehicle assignments and operation sequences, which are evaluated for makespan using the same simulation logic as the BRKGA decoder.

The best solution among all baseline heuristics is reported as ``Heuristics'' in our experimental results (Section~6), representing the strongest non-metaheuristic approach for comparison.

\begin{figure}[t]
    \centering
    \includegraphics[width=0.95\textwidth]{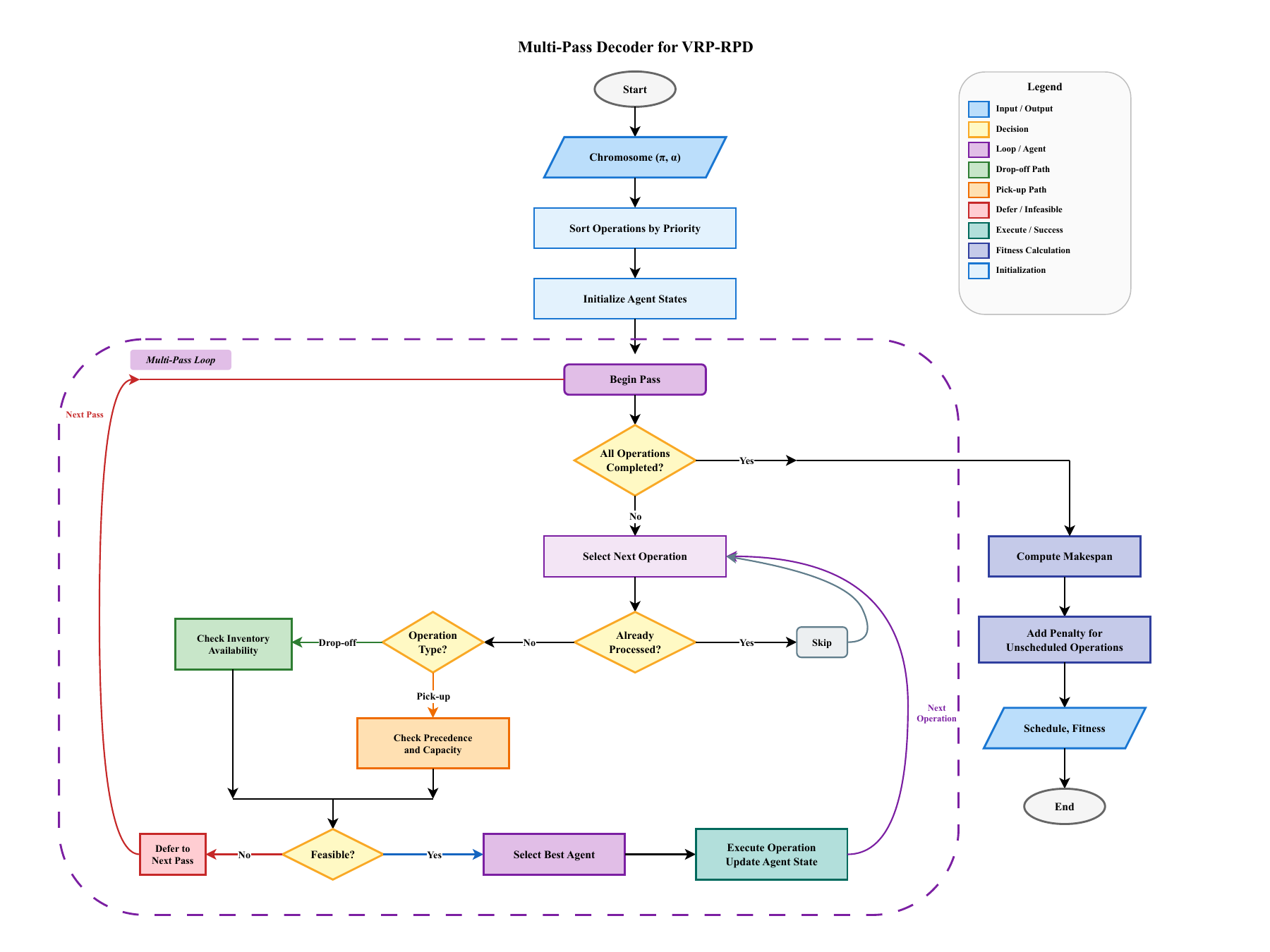}
    \caption{Flowchart of the multi-pass decoder. Operations are processed in priority order; infeasible operations (e.g., pickups with unsatisfied precedence) are deferred to subsequent passes until all dependencies are resolved.}
    \label{fig:decoder}
\end{figure}

\subsection{Biased Random-Key Genetic Algorithm}

The Biased Random-Key Genetic Algorithm represents solutions as vectors of random keys (real numbers in the interval $[0, 1)$)~\cite{bean1994genetic}. A deterministic decoder transforms each chromosome into a feasible solution and computes its fitness. The genetic operators work exclusively on the random-key representation, ensuring that all offspring are valid chromosomes.

The BRKGA evolves a population through generations using three mechanisms: (1) elite preservation, where the best individuals are copied unchanged to the next generation; (2) mutation, where randomly generated individuals are introduced; and (3) biased crossover, where offspring inherit genes from an elite parent with higher probability than from a non-elite parent.

\subsubsection{Chromosome Encoding}

Each solution is encoded as a chromosome $(\boldsymbol{\pi}, \boldsymbol{\alpha})$ of length $4n$, where $n = |C|$ is the number of customers. Let $\mathcal{O} = \{D_1, P_1, D_2, P_2, \ldots, D_n, P_n\}$ denote the set of $2n$ operations (dropoffs $D_c$ and pickups $P_c$ for each customer $c$).

\textbf{Priority genes} $\boldsymbol{\pi} = (\pi_{D_1}, \pi_{P_1}, \ldots, \pi_{D_n}, \pi_{P_n})$ where each $\pi_o \in [0,1)$ represents the relative priority of operation $o \in \mathcal{O}$. Operations with lower priority values are scheduled earlier. This encoding allows the evolutionary algorithm to explore different operation orderings without explicitly representing sequences.

\textbf{Assignment hint genes} $\boldsymbol{\alpha} = (\alpha_{D_1}, \alpha_{P_1}, \ldots, \alpha_{D_n}, \alpha_{P_n})$ where each $\alpha_o \in [0,1)$ provides a preference for which vehicle should execute operation $o$. The decoder maps $\alpha_o$ to vehicle index $\lfloor m \cdot \alpha_o \rfloor$, but may override this hint to maintain feasibility.

This two-component encoding separates concerns where $\boldsymbol{\pi}$ controls when operations occur, while $\boldsymbol{\alpha}$ suggests which vehicle performs them. Genetic operators can independently modify sequencing and assignment, providing greater evolutionary flexibility than a single-component encoding.

\subsubsection{Decoder: Multi-Pass Schedule Construction}

The decoder transforms chromosome $(\boldsymbol{\pi}, \boldsymbol{\alpha})$ into a feasible schedule through multi-pass simulation as shown in Fig \ref{fig:decoder}. The key challenge is handling precedence constraints\textemdash processing operations strictly in priority order may produce infeasible schedules where pickup $P_c$ appears before its dropoff $D_c$ has been scheduled. The decoder addresses this by iteratively deferring infeasible operations until their dependencies are satisfied.

Algorithm~\ref{alg:decoder} presents our multi-pass decoder. Operations are sorted by priority (line 1) and processed iteratively. Dropoff $D_c$ is feasible whenever some vehicle has available capacity (line 9). Pickup $P_c$ is feasible only when dropoff $D_c$ has been scheduled, sufficient time has elapsed for processing ($t \geq T_{\text{drop}}[c] + p_c$), and some vehicle has capacity to retrieve the resource (line 11). Infeasible operations are deferred to subsequent passes.

\begin{algorithm}[t]
\caption{Multi-Pass Decoder}
\label{alg:decoder}
\begin{algorithmic}[1]
\Require Chromosome $(\boldsymbol{\pi}, \boldsymbol{\alpha})$, instance $(C, m, k, d, \mathbf{p})$
\Ensure Schedule $S$, makespan $z$

\State $O \gets \operatorname{argsort}(\boldsymbol{\pi})$
\State $\texttt{scheduled}[o] \gets \texttt{false}$ for all $o \in \mathcal{O}$
\State $t[v] \gets 0$, $q[v] \gets k$, $\ell[v] \gets 0$ for all $v \in V$

\For{$p = 1, \ldots, 2n$}
    \For{each operation $o \in O$}
        \If{$\texttt{scheduled}[o]$}
            \State \textbf{continue}
        \EndIf

        \If{$o = D_c$ for some $c \in C$}
            \State $\texttt{feasible} \gets
            \exists v \in V \text{ s.t. } q[v] > 0$
        \Else
            \State $\texttt{feasible} \gets
            \texttt{scheduled}[D_c] \land
            \exists v \in V \text{ s.t. }
            (t[v] \ge T_{\text{drop}}[c] + p_c \land q[v] < k)$
        \EndIf

        \If{$\texttt{feasible}$}
            \State $v^* \gets \textsc{SelectVehicle}(o,\boldsymbol{\alpha},t,q,\ell)$
            \State Schedule $o$ to $v^*$; update $t[v^*], q[v^*], \ell[v^*]$
            \State $\texttt{scheduled}[o] \gets \texttt{true}$
        \EndIf
    \EndFor

    \If{all operations scheduled}
        \State \textbf{break}
    \EndIf
\EndFor

\State $z \gets \max_{v \in V} \{ t[v] + d_{\ell[v],0} \}$
\State \Return $S, z$
\end{algorithmic}
\end{algorithm}

\textbf{Vehicle Selection Strategy.} The \textsc{SelectVehicle} procedure (line 14) chooses among feasible vehicles by minimizing operation completion time. For operation $o$ at customer $c$, we evaluate each feasible vehicle $v$ by its arrival time $t[v] + d_{\ell[v], c}$, representing current time plus travel from current location $\ell[v]$ to customer $c$. We select $v^* = \arg\min_v \{t[v] + d_{\ell[v], c}\}$, using the assignment hint $\alpha_o$ as a tiebreaker when multiple vehicles achieve the minimum. This greedy criterion balances exploitation of chromosomal guidance with immediate schedule quality.

\textbf{Complexity.} Sorting operations requires $O(n \log n)$ time. Each pass considers $O(n)$ operations, evaluates $O(m)$ vehicles per feasible operation, and performs $O(1)$ state updates per assignment, yielding $O(nm)$ work per pass. With at most $2n$ passes (one per operation in worst case), total decoder complexity is $O(n^2 m)$. In practice, most operations schedule in the first few passes once their dependencies resolve.

\textbf{Infeasibility Handling.} If not all operations are scheduled after $2n$ passes, indicating an infeasible chromosome, we penalize the fitness as $f = z + \lambda \cdot (2n - |\{\texttt{scheduled}\}|)$ where $\lambda = 10^6$ is a large penalty coefficient. This occurs rarely in practice after the first few generations.

\subsubsection{Genetic Operators}

BRKGA evolves a population $P_t$ of size $N_p$ over $T$ generations using three operators:

\textbf{Elite preservation.} The top $N_e$ chromosomes by fitness are copied unchanged to $P_{t+1}$. We set $N_e = 0.15 N_p$.

\textbf{Mutation.} We generate $N_m$ random chromosomes with genes drawn independently from $\mathcal{U}(0,1)$. We set $N_m = 0.15 N_p$ to maintain diversity.

\textbf{Biased crossover.} The remaining $N_p - N_e - N_m$ offspring are generated through parameterized uniform crossover. For each offspring, we select one elite parent uniformly from the top $N_e$ and one non-elite parent uniformly from $P_t \setminus \{\text{elite}\}$. Each gene is inherited from the elite parent with probability $\rho_e = 0.7$ and from the non-elite parent with probability $1 - \rho_e$. This bias toward elite genes accelerates convergence while preserving diversity through non-elite contributions~\cite{goncalves2011}.

\subsubsection{Warm-Start Initialization}

Random initialization produces low-quality initial populations that converge slowly. We employ a two-source warm-start strategy that seeds 15\% of the initial population $P_0$ with chromosomes derived from: (1) the ALNS incumbent solution encoded into the BRKGA representation, and (2) three construction heuristics providing additional starting points. ALNS (described in Section~4.2) is run first and its best solution provides the primary warm-start seed; across all benchmark instances and variants, the ALNS-encoded chromosome consistently yielded the best initial fitness among all warm-start sources, with the construction heuristics always producing inferior initial solutions. The construction heuristic seeds are nevertheless included to inject structural diversity into the warm-started population region, preventing premature convergence around the single ALNS solution archetype.

\textbf{ALNS Seed Encoding.} The ALNS best solution is encoded into a chromosome $(\boldsymbol{\pi}, \boldsymbol{\alpha})$ as follows. For each operation $o$ assigned to vehicle $v$ at route position $r$ out of $L_v$ total operations, the priority gene is set to $\pi_o = (r - 1) / L_v$, so that operations appearing earlier in the route receive proportionally lower priority values (i.e., are scheduled first by the decoder). The assignment hint gene is set to $\alpha_o = v / m$, directly encoding the ALNS vehicle assignment. This mapping ensures the BRKGA decoder faithfully reconstructs a schedule close to the ALNS solution.

\textbf{Construction Heuristic Seeds.} Three construction heuristics provide additional warm-start chromosomes that explore different regions of the solution space:

\begin{itemize}
    \item \textbf{Nearest Neighbor (NN):} Greedily assigns each operation to the nearest available vehicle, producing compact routes biased toward distance minimization.
    \item \textbf{Load-Balanced:} Distributes operations across vehicles to equalize workload, reducing the likelihood that any single vehicle determines the makespan.
    \item \textbf{Shortest Processing Time (SPT):} Prioritizes dropoff operations by ascending processing time, ensuring resources with shorter jobs complete early and free vehicles for subsequent pickups.
\end{itemize}

Each construction heuristic solution is encoded into a chromosome using the same priority/assignment gene mapping as the ALNS seed.

\textbf{Seed Diversity.} Each source chromosome is perturbed by adding $\epsilon \sim \mathcal{U}(-0.03, 0.03)$ to each gene (clamped to $[0,1)$) to produce multiple diverse warm-start variants. Together, 20 warm-start seeds (drawn primarily from the ALNS encoding with additional variants from the three construction heuristics) are injected into $15\%$ of the initial population $P_0$. The remaining $85\%$ is filled with randomly generated chromosomes to maintain population diversity for evolutionary exploration. Since the ALNS warm-start chromosome always achieved the best initial fitness, the construction heuristic seeds serve a structural role: they ensure the warm-started elite region is not entirely composed of minor perturbations of the same ALNS solution, providing BRKGA with diverse recombination material from its very first generation.

\subsubsection{Algorithmic Configuration}

Table~\ref{tab:brkga_config} summarizes BRKGA hyperparameters. Population size and generation limits balance solution quality with computational budget. We run for a fixed budget of $T = 20{,}000$ generations.

\begin{table}[t]
\centering
\caption{BRKGA hyperparameters}
\label{tab:brkga_config}
\begin{tabular}{ll}
\toprule
Parameter & Value \\
\midrule
Population size ($N_p$) & 30000 \\
Elite proportion ($N_e / N_p$) & 0.15 \\
Mutant proportion ($N_m / N_p$) & 0.15 \\
Elite bias in crossover ($\rho_e$) & 0.7 \\
Maximum generations ($T$) & 20000 \\
Warm-start proportion & 0.15 \\
\bottomrule
\end{tabular}
\end{table}

\section{Experimental Setup}
\label{sec:experiments}

\subsection{Benchmark Instances and Problem Variants}

We evaluate VRP-RPD on 14 benchmark instances derived from TSPlib~\cite{reinelt1991tsplib}, using edge weights as travel times. Instance sizes range from 17 to 1000 customers, providing diverse problem scales and geographical structures. For each instance, customer processing times are generated according to five systematic variants designed to test the hypothesis that cross-vehicle coordination opportunities increase with processing duration.

\textbf{Processing-Time Variants.} Let $d_{\min}$ and $d_{\max}$ denote the minimum and maximum edge weights in each instance. We define five processing-time configurations:

\begin{itemize}
\item \textbf{base}: Processing times sampled uniformly from $[d_{\min}, d_{\max}]$. This establishes a baseline where processing durations are commensurate with typical travel times.

\item \textbf{2$\times$}: Each base processing time is multiplied by 2.

\item \textbf{5$\times$}: Each base processing time is multiplied by 5.

\item \textbf{1R10}: Each base processing time is multiplied by a random integer in $[1, 10]$, introducing heterogeneity in processing durations while maintaining the overall scale.

\item \textbf{1R20}: Each base processing time is multiplied by a random integer in $[1, 20]$, creating high variability in processing requirements.
\end{itemize}

We generate one instance per dataset for the base, 2$\times$, and 5$\times$ variants (deterministic multipliers), and 10 instances per dataset for the 1R10 and 1R20 variants (stochastic multipliers), yielding 322 total problem instances: $14 \text{ datasets} \times (3 + 2 \times 10) = 322$ instances.

\textbf{Fleet Configuration.} For instances with fewer than 24 customers, we use $m = 3$ vehicles with capacity $k = 5$ resources each. For instances with 24 or more customers, we use $m = 6$ vehicles with capacity $k = 4$ resources each. These configurations ensure sufficient capacity to serve all customers while maintaining non-trivial routing constraints.

\subsection{Hypothesis: Processing Time and Cross-Vehicle Coordination}

The processing-time variants are designed to test a fundamental hypothesis about VRP-RPD's operational dynamics. Longer processing times create greater opportunities for cross-vehicle coordination, enabling vehicles to serve additional customers while resources operate autonomously elsewhere.

\textbf{Operational Rationale.} Consider two extreme scenarios:

\begin{enumerate}
\item \textbf{Short processing times ($p_c \ll d_{ij}$):} When processing completes quickly relative to travel times, a vehicle that performs dropoff at customer $c$ can profitably wait to perform the pickup itself. The time penalty for waiting is minimal. Cross-vehicle coordination offers little benefit because the drop-off vehicle has a limited opportunity to serve other customers before the resource becomes available for pickup.

\item \textbf{Long processing times ($p_c \gg d_{ij}$):} When processing requires a significant duration, the dropoff vehicle can travel to multiple distant customers, deploy additional resources, and potentially return to the depot before the first resource completes processing. A different vehicle can then retrieve the processed resource, maximizing fleet utilization. The longer the processing time, the more customers the dropoff vehicle can serve during the autonomous processing window.
\end{enumerate}

This operational logic suggests that as processing times increase from base $\to$ 2$\times$ $\to$ 5$\times$, the proportion of customers served via cross-vehicle coordination (different vehicles handling dropoff and pickup) should increase systematically.

\textbf{Key Metrics.} We measure two coordination patterns in solutions:

\begin{itemize}
\item \textbf{Cross-Agent \%}: The percentage of customers where dropoff and pickup are performed by different vehicles. This directly quantifies cross-vehicle coordination opportunities exploited by the solution.

\item \textbf{Interleaved \%}: The percentage of customers where a vehicle performs other operations between that customer's dropoff and pickup. Interleaving can occur with same-vehicle or cross-vehicle patterns.
\end{itemize}

\subsection{Statistical Validation of Processing-Time Hypothesis}

We validate the hypothesis using solutions generated by BRKGA on all benchmark datasets across all five processing-time variants. For each instance, we extract Cross-Agent \% and Interleaved \% from the best solution found.

\textbf{Paired Comparisons.} Table~\ref{tab:hypothesis_tests} presents paired Wilcoxon signed-rank tests comparing each variant against the base configuration. Since each dataset produces matched observations across variants (same customers, same travel times, only processing times differ), we use paired tests with $n = 14$ pairs. The one-sided alternative hypothesis is that the variant produces higher values than base.

\textbf{Key Findings from Paired Tests:}
\begin{itemize}
\item \textbf{Cross-Agent \%}: All processing-time increases produce highly significant gains ($p < 0.01$, Cohen's $d > 1.1$). Mean increases range from $+16.0$ percentage points (base $\to$ 2$\times$) to $+21.3$ pp (base $\to$ 5$\times$), with large effect sizes indicating substantial practical significance.

\item \textbf{Interleaved \%}: No variant shows significant change relative to base ($p > 0.05$). Mean differences are negligible ($|\Delta| < 2$ pp), confirming that interleaving is orthogonal to processing duration.
\end{itemize}

These results strongly support the hypothesis that longer processing times systematically increase cross-vehicle coordination without fundamentally changing route interleaving patterns.

\begin{table}[H]
\centering
\caption{Paired Wilcoxon signed-rank tests (base vs.\ variant, paired by instance, $n = 14$ pairs). $\Delta$ = mean difference in percentage points (variant $-$ base). Cohen's $d$ is computed on the paired differences. One-sided alternative: variant $>$ base.}
\label{tab:hypothesis_tests}
\small
\begin{tabular}{ll rrrrl}
\toprule
\textbf{Metric} & \textbf{Comparison} & \textbf{Mean $\Delta$ (pp)} & \textbf{Cohen's $d$} & \textbf{$W$} & \textbf{$p$-value} & \textbf{Sig.} \\
\midrule
\multirow{4}{*}{Cross-Agent \%}
 & base $\to$ 2\tx   & $+16.0$ & 1.28 & 105.0 & 0.0001 & *** \\
 & base $\to$ 5\tx   & $+21.3$ & 1.21 & 105.0 & 0.0001 & *** \\
 & base $\to$ 1R10 & $+16.3$ & 1.14 &  99.0 & 0.0009 & *** \\
 & base $\to$ 1R20 & $+16.8$ & 1.12 &  98.0 & 0.0012 & **  \\
\midrule
\multirow{4}{*}{Interleaved \%}
 & base $\to$ 2\tx   & $+0.1$  & 0.05 &  28.0 & 0.4797 &     \\
 & base $\to$ 5\tx   & $-0.4$  & $-0.08$ & 15.0 & 0.8131 &     \\
 & base $\to$ 1R10 & $+1.1$  & 0.46 &  76.0 & 0.0765 &     \\
 & base $\to$ 1R20 & $+1.6$  & 0.46 &  72.0 & 0.1206 &     \\
\bottomrule
\end{tabular}
\end{table}

\textbf{Interpretation.} The statistical evidence overwhelmingly supports the processing-time hypothesis:

\begin{enumerate}
\item Cross-vehicle coordination increases monotonically with processing duration, with large effect sizes (Cohen's $d > 1.1$) indicating practical importance.

\item The trend is robust across deterministic (2$\times$, 5$\times$) and stochastic (1R10, 1R20) processing-time distributions, demonstrating that the effect depends on average duration rather than variability.

\item Interleaving patterns remain stable, indicating that cross-vehicle coordination exploits temporal separation (processing duration) rather than spatial structure (route geometry).

\end{enumerate}

These findings validate VRP-RPD's core operational premise that when resources operate autonomously for extended periods, decoupling dropoff and pickup vehicles becomes increasingly valuable. The problem variants successfully isolate this effect, enabling rigorous evaluation of solution approaches under different operational regimes.

\subsection{Computational Environment}

All experiments were conducted on a computing cluster with Intel Xeon Platinum 8160 processors (2.1 GHz) and 192 GB RAM per node and 8 NVIDIA L40 GPUs. Both BRKGA and ALNS were implemented in CUDA C++ (compiled with NVCC, C++17) targeting compute capabilities 8.0/8.9/9.0, with host-side coordination in C++. Solution encoding and result processing utilities were implemented in Python 3.9. Baseline heuristics were implemented in Python 3.9 using NumPy 1.21.

\section{Results}
\label{sec:results}

\begin{longtable}{p{1.5cm}|p{3cm}|r|r|r|r|r}
\caption{VRP-RPD makespan comparison across problem variants and approaches.}
\label{tab:vrp-rpd-combined} \\
\toprule
\textbf{Instance} & \textbf{Approach} & \textbf{base} & \textbf{2x} & \textbf{5x} & \textbf{1R10} & \textbf{1R20} \\
\midrule
\endfirsthead

\toprule
\textbf{Instance} & \textbf{Approach} & \textbf{base} & \textbf{2x} & \textbf{5x} & \textbf{1R10} & \textbf{1R20} \\
\midrule
\endhead

\midrule
\multicolumn{7}{r}{\textit{Continued on next page}} \\
\endfoot

\bottomrule
\endlastfoot

gr17 & (0) Heuristics & 2,738 & 3,680 & 5,816 & 6,733 & 11,616 \\
 & (1) ALNS & 1,449 & 1,807 & 3,795 & 5,369 & 9,739 \\
 & (2) ALNS+BRKGA & \textbf{1,405} & \textbf{1,800} & \textbf{3,795} & \textbf{5,369} & \textbf{9,739} \\
 & 1 Impr. \% & 47.08 & 50.90 & 34.75 & 20.26 & 16.16 \\
 & 2 vs 1 Impr. \% & 3.04 & 0.39 & 0.00 & 0.00 & 0.00 \\
\midrule

gr21 & Heuristics & 2,792 & 4,028 & 7,736 & 10,138 & 17,817 \\
  & (1) ALNS & 2,162 & 2,606 & 4,345 & 6,233 & 11,653 \\
  & (2) ALNS+BRKGA & \textbf{2,125} & \textbf{2,578} & \textbf{4,322} & \textbf{6,233} & \textbf{11,653} \\
 & 1 Impr. \% &22.56 & 35.30 & 43.83 & 38.51 & 34.60 \\
& 2 vs 1 Impr. \% & 1.71 & 1.07 & 0.53 & 0.00 & 0.00 \\
\midrule

gr24 & Heuristics & 2,283 & 2,823 & 5,709 & 6,866 & 11,933 \\
  & (1) ALNS & 967 & 1,336 & 2,487 & 3,624 & 6,730 \\
  & (2) ALNS+BRKGA & \textbf{960} & \textbf{1,336} & \textbf{2,479} & \textbf{3,620} & \textbf{6,730} \\
 & 1 Impr. \% &57.64 & 52.67 & 56.44 & 47.22 & 43.60 \\
& 2 vs 1 Impr. \% & 0.72 & 0.00 & 0.32 & 0.11 & 0.00 \\
\midrule

bays29 & Heuristics & 1,704 & 2,438 & 4,625 & 5,908 & 9,957 \\
  & (1) ALNS & 1,242 & 1,561 & 2,886 & 4,310 & 8,178 \\
  & (2) ALNS+BRKGA & \textbf{1,084} & \textbf{1,484} & \textbf{2,886} & \textbf{4,310} & \textbf{8,178} \\
 & 1 Impr. \% &27.11 & 35.97 & 37.60 & 27.05 & 17.87 \\
& 2 vs 1 Impr. \% & 12.72 & 4.93 & 0.00 & 0.00 & 0.00 \\
\midrule

gr48 & Heuristics & 5,741 & 8,984 & 15,747 & 20,160 & 34,979 \\
  & (1) ALNS & 3,743 & 4,796 & 8,641 & 10,158 & 19,467 \\
  & (2) ALNS+BRKGA & \textbf{3,145} & \textbf{3,991} & \textbf{6,974} & \textbf{10,046} & \textbf{19,449} \\
 & 1 Impr. \% &34.80 & 46.62 & 45.13 & 49.61 & 44.35 \\
& 2 vs 1 Impr. \% & 15.98 & 16.78 & 19.29 & 1.10 & 0.09 \\
\midrule

eil51 & Heuristics & 420 & 727 & 1,226 & 1,617 & 2,890 \\
  & (1) ALNS & 319 & 382 & 702 & 823 & 1,505 \\
  & (2) ALNS+BRKGA & \textbf{248} & \textbf{333} & \textbf{620} & \textbf{795} & \textbf{1,466} \\
 & 1 Impr. \% &24.05 & 47.46 & 42.74 & 49.11 & 47.93 \\
& 2 vs 1 Impr. \% & 22.26 & 12.83 & 11.68 & 3.40 & 2.59 \\

\midrule
berlin52 & Heuristics & 9,076 & 15,283 & 26,789 & 36,606 & 64,700 \\
  & (1) ALNS & 6,168 & 7,823 & 14,401 & 17,514 & 31,250 \\
  & (2) ALNS+BRKGA & \textbf{4,947} & \textbf{6,729} & \textbf{12,361} & \textbf{16,935} & \textbf{30,450} \\
 & 1 Impr. \% &32.04 & 48.81 & 46.24 & 52.16 & 51.70 \\
& 2 vs 1 Impr. \% & 19.80 & 13.99 & 14.16 & 3.31 & 2.56 \\
\midrule

kroA100 & Heuristics & 39,484 & 66,803 & 140,206 & 160,957 & 296,876 \\
  & (1) ALNS & 20,079 & 29,811 & 61,202 & 69,526 & 123,227 \\
  & (2) ALNS+BRKGA & \textbf{18,218} & \textbf{26,900} & \textbf{52,200} & \textbf{57,257} & \textbf{99,910} \\
 & 1 Impr. \% &49.15 & 55.37 & 56.35 & 56.80 & 58.49 \\
& 2 vs 1 Impr. \% & 9.27 & 9.76 & 14.71 & 17.64 & 18.93 \\
\midrule

eil101 & Heuristics & 918 & 1,587 & 3,449 & 3,990 & 7,570 \\
  & (1) ALNS & 511 & 698 & 1,360 & 1,543 & 2,821 \\
  & (2) ALNS+BRKGA & \textbf{417} & \textbf{609} & \textbf{1,248} & \textbf{1,375} & \textbf{2,414} \\
 & 1 Impr. \% &44.34 & 56.02 & 60.57 & 61.32 & 62.73 \\
& 2 vs 1 Impr. \% & 18.40 & 12.75 & 8.24 & 10.89 & 14.43 \\
\midrule

gr202 & Heuristics & 118,892 & 197,321 & 425,348 & 498,333 & 956,989 \\
  & (1) ALNS & 45,968 & 72,348 & 160,902 & 179,538 & 334,579 \\
  & (2) ALNS+BRKGA & \textbf{44,577} & \textbf{71,493} & \textbf{154,755} & \textbf{169,604} & \textbf{318,902} \\
 & 1 Impr. \% &61.34 & 63.33 & 62.17 & 63.97 & 65.04 \\
& 2 vs 1 Impr. \% & 3.03 & 1.18 & 3.82 & 5.53 & 4.69 \\
\midrule

gr431 & Heuristics & 769,903 & 1,307,665 & 2,676,956 & 3,510,529 & 6,671,498 \\
  & (1) ALNS & \textbf{237,153} & 414,597 & 952,744 & 1,068,782 & 2,008,478 \\
  & (2) ALNS+BRKGA & \textbf{237,153} & \textbf{405,600} & \textbf{939,030} & \textbf{1,035,496} & \textbf{1,941,286} \\
 & 1 Impr. \% &69.20 & 68.29 & 64.41 & 69.55 & 69.89 \\
& 2 vs 1 Impr. \% & 0.00 & 2.17 & 1.44 & 3.11 & 3.35 \\
\midrule

gr666 & Heuristics & 1,170,180 & 2,256,865 & 4,938,132 & 6,010,575 & 10,616,970 \\
  & (1) ALNS & 390,905 & 666,360 & 1,468,773 & 1,630,390 & 3,074,987 \\
  & (2) ALNS+BRKGA & \textbf{383,516} & \textbf{651,691} & \textbf{1,435,787} & \textbf{1,579,499} & \textbf{2,977,115} \\
 & 1 Impr. \% &66.59 & 70.47 & 70.26 & 72.87 & 71.04 \\
& 2 vs 1 Impr. \% & 1.89 & 2.20 & 2.25 & 3.12 & 3.18 \\
\midrule

rat783 & Heuristics & 41,098 & 72,681 & 150,034 & 192,745 & 362,715 \\
  & (1) ALNS & 13,262 & 22,752 & 53,345 & 58,557 & 111,479 \\
  & (2) ALNS+BRKGA & \textbf{12,717} & \textbf{21,960} & \textbf{51,529} & \textbf{55,829} & \textbf{106,480} \\
 & 1 Impr. \% &67.73 & 68.70 & 64.44 & 69.62 & 69.27 \\
& 2 vs 1 Impr. \% & 4.11 & 3.48 & 3.40 & 4.66 & 4.49 \\
\midrule

dsj1000 & Heuristics & 118,271,669 & 203,504,762 & 429,802,387 & 552,124,204 & 1,038,842,225 \\
  & (1) ALNS & 34,450,624 & 62,234,952 & 149,084,976 & 163,898,501 & 310,814,307 \\
  & (2) ALNS+BRKGA & \textbf{33,768,560} & \textbf{60,764,128} & \textbf{145,779,376} & \textbf{159,381,902} & \textbf{302,233,680} \\
 & 1 Impr. \% &70.87 & 69.42 & 65.31 & 70.31 & 70.08 \\
& 2 vs 1 Impr. \% & 1.98 & 2.36 & 2.22 & 2.76 & 2.76 \\
\end{longtable}

Table~\ref{tab:vrp-rpd-combined} presents makespan results for BRKGA, ALNS, and baseline heuristics across 14 benchmark instances and five processing-time variants. Both metaheuristics consistently outperform the best heuristic solution across all configurations, with the ALNS$\to$BRKGA pipeline achieving the best or joint-best result on every instance-variant pair. Bold entries indicate the best result for each pair.

A first structural observation concerns how ALNS's improvement over baseline heuristics scales with processing duration. Across all instances, the reduction achieved by ALNS relative to heuristics (row ``1 Impr.~\%'') increases systematically as processing times grow from base through 2$\times$ and 5$\times$, with this trend most pronounced on larger instances. For example, on \texttt{kroA100}, ALNS reduces makespan by 49.15\% at base but 58.49\% at 1R20; on \texttt{gr666}, the corresponding figures are 66.59\% and 71.04\%. This pattern has a direct mechanistic explanation. Baseline heuristics commit to coordination structures early during construction and lack the ability to revise cross-vehicle assignments. As processing times lengthen, the temporal slack between dropoff completion and pickup availability widens, creating coordination opportunities that heuristic construction cannot exploit but ALNS's destroy-repair cycle can. Operators such as Shaw Removal and Critical Path Removal iteratively dissolve and rebuild cross-vehicle assignments, allowing ALNS to exploit the increased temporal flexibility that longer processing windows afford. Heuristics, by contrast, incur a compounding penalty as processing durations grow, because their early coordination commitments become increasingly suboptimal relative to the expanded feasible assignment space.

A second observation concerns the marginal contribution of BRKGA on top of the ALNS incumbent (row ``2 vs 1 Impr.~\%''). On small-to-medium instances (up to approximately 100 customers), BRKGA achieves substantial additional 
reductions, often in the range of 10--22\%, reflecting its ability to discover coordination patterns beyond the local optima that ALNS's neighborhood search converges to. On larger instances ($n \geq 431$), BRKGA's marginal gains diminish markedly, typically falling below 5\%. This is consistent with the exponential growth of the coordination assignment space with instance size: as $n$ increases, the population of elite chromosomes becomes increasingly sparse relative to the solution landscape, reducing the probability that biased crossover recombines complementary coordination patterns into superior offspring. Nevertheless, BRKGA still achieves consistent marginal improvements on most large instances, and the cases where ``2 vs 1 Impr.~\%'' equals zero are concentrated at higher processing-time variants on small instances, suggesting that ALNS already saturates the accessible improvement in those configurations.

A third and more nuanced observation emerges when examining BRKGA's marginal gains on large instances ($n \geq 202$) across processing-time variants. While gains at the base processing-time configuration are minimal (typically 1--3\%), they increase modestly but consistently at 2$\times$ and 5$\times$ 
variants. On \texttt{gr431}, for instance, BRKGA improves upon ALNS by 0.00\% at base but 2.17\% and 1.44\% at 2$\times$ and 5$\times$ respectively; on \texttt{dsj1000}, the corresponding figures are 1.98\%, 2.36\%, and 2.22\%. This pattern suggests a structural property of the VRP-RPD solution landscape under longer processing regimes. At base processing times on large instances, the tight coupling between dropoff and pickup timing constrains the feasible coordination space, and most near-optimal solutions exploit similar coordination patterns. The landscape near the ALNS incumbent is narrow, and BRKGA's crossover operators find little diverse recombination material. As processing times increase, the widened temporal windows support a richer set of viable cross-vehicle coordination structures, enabling qualitatively different but comparably 
strong solutions to coexist in the BRKGA population. Crossover between elite chromosomes exploiting distinct coordination archetypes can then produce offspring that inherit complementary assignment patterns, giving population-based evolutionary search a foothold that single-trajectory neighborhood search cannot exploit. While we infer these landscape properties indirectly from aggregate performance, the consistency of the effect across multiple large instances and across both deterministic and stochastic processing-time variants strengthens the interpretation.

Together, these observations support a coherent picture of the pipeline's behavior: ALNS provides the dominant improvement by exploiting coordination flexibility through iterative neighborhood restructuring, while BRKGA contributes meaningfully where the solution landscape supports diverse elite archetypes\textemdash primarily on small-to-medium instances and on larger instances under extended processing regimes. The pipeline's aggregate performance, reducing makespan by 16--74\% over baseline heuristics across all configurations, establishes robust benchmarks for VRP-RPD and demonstrates that a staged metaheuristic approach explicitly modeling cross-vehicle coordination flexibility outperforms both standalone construction heuristics and single-stage search.

\section{Conclusion}
\label{sec:conclusion}

We introduce the Vehicle Routing Problem with Resource-Constrained Pickup and Delivery (VRP-RPD), a novel optimization problem that addresses operational scenarios where transport vehicles deploy autonomous resources to customer locations, and different vehicles may perform dropoff and pickup operations at the same location. This decoupling of paired operations, absent from classical pickup-and-delivery formulations, creates inter-route precedence dependencies that fundamentally distinguish VRP-RPD from existing problem classes. The problem captures practical applications in autonomous robotics deployment, disaster response logistics, construction equipment rental, healthcare diagnostics distribution, and agricultural sensor networks, where transport capacity is scarce and vehicles need not remain idle during autonomous processing.

\subsection{Problem Formulation and Computational Complexity}

We provide a complete mixed-integer linear programming (MILP) formulation for VRP-RPD with makespan minimization objectives. The formulation incorporates: (1) cross-vehicle coordination constraints that permit different vehicles to handle dropoff and pickup at the same customer, (2) temporal precedence constraints ensuring pickup cannot occur before processing completion regardless of vehicle assignment, and (3) dynamic capacity constraints tracking resource deployment and retrieval throughout vehicle routes. We prove VRP-RPD is NP-hard through reduction from the min-max Vehicle Routing Problem.

Computational experiments with exact methods using Gurobi demonstrate severe scalability limitations. For instances with 17--24 customers, the solver produces feasible solutions but fails to close optimality gaps beyond approximately 70\% after two hours on high-performance hardware (32 cores, 128GB RAM). The extensive use of big-M constraints in time propagation and cross-vehicle precedence linking yields weak LP relaxations that cannot provide meaningful quality certificates for metaheuristic solutions. For instances with 50+ customers, memory requirements exceed practical limits before substantive bound improvement occurs. This intractability confirms that VRP-RPD's cross-vehicle coordination structure creates fundamental computational barriers beyond standard vehicle routing complexity, motivating metaheuristic approaches for practical problem scales.

\subsection{Benchmark Datasets and Processing-Time Variants}

To facilitate future research, we establish a comprehensive benchmark suite derived from 14 TSPlib instances (17--1000 customers) with five systematic processing-time variants. The variants (base, 2$	imes$, 5$	imes$, 1R10, and 1R20) are designed to isolate the effect of processing duration on cross-vehicle coordination opportunities. Statistical validation across instances confirms the operational hypothesis that longer processing times systematically increase cross-vehicle coordination (16--21 percentage point gains, $p < 0.001$, large effect sizes) while leaving route interleaving patterns unchanged. This demonstrates that VRP-RPD's coordination benefits stem from temporal separation between operations rather than spatial route structure. The 322 benchmark instances span diverse problem scales and coordination regimes, providing a rigorous testbed for algorithm development. Datasets are publicly available at \url{https://github.com/Harishjitu/vrp-rpd}.

\subsection{Solution Approaches and Benchmark Establishment}

We develop a sequential ALNS$\to$BRKGA pipeline tailored to VRP-RPD's decoupled structure. ALNS is executed first to obtain a refined incumbent solution, which is then encoded as a warm-start seed for the Biased Random-Key Genetic Algorithm (BRKGA). BRKGA employs: (1) a two-component chromosome encoding that separately represents operation priorities and vehicle assignments, (2) a multi-pass decoder that iteratively resolves temporal dependencies by deferring infeasible operations, and (3) a two-source warm start that seeds the initial population from both the ALNS incumbent chromosome and construction heuristic solutions.

The GPU-accelerated Adaptive Large Neighborhood Search (ALNS) provides complementary strengths through intensive parallel neighborhood exploration. ALNS features six specialized destroy operators (including critical path removal that targets the makespan bottleneck and Shaw removal that exploits VRP-RPD's relatedness structure) and four repair operators with two-pass timing evaluation for accurate cross-vehicle precedence handling. The implementation runs 32 concurrent ALNS instances per GPU with multi-GPU scaling, enabling massively parallel evaluation of insertion neighborhoods on large instances.

Computational experiments demonstrate that BRKGA achieves the best or joint-best result on every instance-variant pair, consistently outperforming both standalone ALNS and baseline heuristics. BRKGA improvements over baseline heuristics range from 16\% to 74\% across all 322 instances, with ALNS providing a strong warm-start incumbent that BRKGA refines through evolutionary search. Together, these solutions establish robust benchmarks for the 322 problem instances.

\subsection{Future Research Directions}

The VRP-RPD formulation opens several research avenues:

\textbf{Exact methods.} Developing tighter formulations through valid inequalities, alternative variable definitions, or branch-and-price decompositions could extend exact solvability beyond current limits. The weak LP bounds from big-M constraints suggest that reformulations exploiting problem structure may yield substantial computational gains.

\textbf{Hybrid metaheuristics.} The sequential ALNS$\to$BRKGA pipeline could be extended in several directions: running multiple ALNS restarts to provide a diverse portfolio of warm-start seeds, embedding ALNS local search as a post-processing step within BRKGA's elite-preservation phase, or using the BRKGA elite population to periodically reinitialize ALNS instances. Learning-based operator selection and neural network-guided destroy-repair strategies could further exploit temporal coordination patterns identified across the benchmark suite.

\textbf{Problem extensions.} Practical variants include heterogeneous processing times with uncertain durations, time windows on customer service, heterogeneous vehicle capacities and speeds, stochastic travel times, and dynamic problem versions where customer requests arrive online. Each extension introduces additional coordination complexity requiring specialized algorithmic treatment.

\textbf{Real-world validation.} Empirical studies in autonomous robotics deployment, disaster response operations, or equipment rental logistics could validate the operational benefits of cross-vehicle coordination and refine problem assumptions based on field constraints.

VRP-RPD addresses a gap in vehicle routing literature by formalizing scenarios where paired operations occur at the same location but may be performed by different agents across time. The problem's distinctive coordination structure, proven computational difficulty, and practical relevance establish it as a meaningful addition to the pickup-and-delivery problem taxonomy. The benchmark suite, MILP formulation, and sequential ALNS$	o$BRKGA pipeline, with BRKGA consistently achieving the best solutions across all instance sizes by refining the ALNS incumbent through evolutionary search, provide a foundation for ongoing research in this domain.

\bibliographystyle{plainnat}

\end{document}